 \def\th@plain{\upshape}
\newtheorem{theorem}{Theorem}[section]
\newtheorem{prop}[theorem]{Proposition}
\newtheorem{lemma}[theorem]{Lemma}
\newtheorem{cor}[theorem]{Corollary}
\newtheorem{Example}[theorem]{Example}
\newtheorem{definition}[theorem]{Definition}
\newtheorem{rem}[theorem]{Remark}
\title{Note on spherical quandles}
\author{
Kentaro Yonemura
\thanks{
the 2010 MSC: \texttt{57K10, 57K12}, E-mail: \texttt{3MA20009Y@s.kyushu-u.ac.jp}}
}
\date{}
\begin{document}

\maketitle
\begin{abstract}
   This paper aims to consider spherical quandles and give a one-to-one correspondence between $SU(2)$-representations of knot groups and colorings of knots with spherical quandles.
\end{abstract}

\begin{keywords}
spherical quandles
\end{keywords}

\section{Introduction}
In 1982, Joyce \cite{Joyce1982} and Matveev \cite{Matveev1982} defined an algebraic system called quandles and constructed the almost complete knot invariant called the knot quandle or the fundamental quandle of knots.

In knot theory, $SU(2)$-representations of knot groups are interesting subject of study since they give some invaluable knot invariants. For example, X.-S. Lin \cite{Lin1992} defined the Casson-Lin invariant as an analogue of the original Casson invariant \cite{AkbulutMcCarthy}. Surprisingly, Lin proved that the invariant is one half of the signature of knots. After that, Herald \cite{Herald1997} and Heusener-Kroll \cite{HeusenerKroll1998} extended the invariant independently.

There are two quandles which seems different but have same name. In 1994, Azcan-Fenn \cite{AzcanFenn1994} defined the spherical quandle and ambiguously suggested the connection between $SU(2)$-representations of knot groups and quandle homomorphisms from knot quandles to the spherical quandle on the 2-sphere. On the other hand, in 2018, Clark-Saito \cite{ClarkSaito} defined a family of quandles on conjugacy classes of $SU(2)$ and called them spherical quandles. They defined a knot invariant called a longitudinal mapping and calculate it in the case of $SU(2)$ using the quandle.

The aims of this paper is to consider two problems: to consider the difference of the two definition of spherical quandles and to give a concrete extension of Azcan-Fenn's suggestion.

This paper is organized as follows. In section \ref{section_preliminaries}, the basic notation and facts on quandles and $SU(2)$ are presented. In section \ref{section_spherical_quandles}, we recall the definitions of spherical quandles by Azcan-Fenn and by Clark-Saito, and show that the former is a kind of the latter. In section \ref{section_1to1correspondence}, we give a one-to-one correspondence between $SU(2)$-representations and spherical quandle-colorings extending Azcann-Fenn's suggestion. 

\section{Preliminaries}\label{section_preliminaries}
We recall definitions and facts using in this paper without proofs.

\subsection{Quandle}
We see the definition of a quandle and some basic facts. See Kamada \cite{Kamadabook} and Nosaka \cite{Nosaka2017book} for more details.
\begin{definition}[Joyce \cite{Joyce1982}, Matveev \cite{Matveev1982}]
\label{def_quandle}
A \textit{quandle} is a set $X$ with a binary operation $\triangleright:X\times X\to X$ satisfying the three conditions:

\noindent(Q1) $x\triangleright x = x$ for any $x\in X$.

\noindent(Q2) The map $S_y:X\to X$ defined by $x\mapsto x\triangleright y$ is bijective for any $y\in X$.

\noindent(Q3) $(x\triangleright y)\triangleright z = (x\triangleright z)\triangleright(y\triangleright z)$ for any $x,y,z\in X$.
\end{definition}

\begin{Example}
Suppose $X$ is a set and $x\triangleright y=x$ for any $x,y\in X$. Then $(X,\triangleright)$ is a quandle called a \textit{trivial quandle}.
\end{Example}

\begin{Example}Suppose $X=\mathbb{Z}/n\mathbb{Z}$ and $x\triangleright y=2y-x$ for any $x,y\in X$. Then $(X,\triangleright)$ is a quandle called a \textit{dihedral quandle} \cite{Takasaki1943}.
\end{Example}

\begin{Example}
\label{def_augmented_quandle}
Let $G$ be a group, $X$ a set on which $G$ acts from the right, and  $\kappa:X\to G$ a map satisfying the two conditions:
\begin{enumerate}
    \item $\kappa(x\cdot g)=g^{-1}\kappa(x)g$ for any $x\in X$ and $g\in G$.
    \item $x\cdot \kappa(x)=x$ for any $x\in X$.
\end{enumerate}
Suppose $x\triangleright y=x\cdot\kappa(y)$ for any $x,y\in X$. Then $(X,\triangleright)$ is a quandle called an \textit{augmented quandle} \cite{Joyce1982}. We denote this quandle by $(X,G,\kappa)$ or simply $X$. The quandle $X$ is said to be \textit{faithful} if the map $\kappa$ is injective.
\end{Example}

\begin{Example}[Eisermann \cite{Eisermann2014}]
\label{Eisermann augmented quandle}
Let $G$ be a Lie group and $\mathfrak{g}$ be the Lie algebra of $G$. The Lie group $G$ acts on $\mathfrak{g}$ via the adjoint right action: $X\cdot g:=g^{-1}Xg$ for any $X\in \mathfrak{g}$ and $g\in G$. Then $(\mathfrak{g},G,\operatorname{exp})$ is an augumented quandle , where $\operatorname{exp}:\mathfrak{g}\to G$ is the exponential map.
\end{Example}

\begin{Example}
\label{knot quandle_def}
Let $K$ be a tame knot in the 3-sphere, $\pi_K=\pi_1(S^3\setminus K)$ the knot group of $K$, and $H=\langle\mathfrak{m},\mathfrak{l\rangle}$ the subgroup of $\pi_K$ generated by a meridian $\mathfrak{m}\in\pi_K$ and the preferred longitude $\mathfrak{l}\in \pi_K$ of $K$. Suppose $X=H\backslash\pi_K$ and $Hx\triangleright Hy=H\mathfrak{m}xy^{-1}\mathfrak{m}y$ for $x,y\in\pi_K$. Then, the algebraic system $(X,\triangleright)$ is a quandle and called the \textit{knot quandle} of $K$ or the \textit{fundamental quandle} of $K$ \cite{Joyce1982,Matveev1982}. We denote this quandle by $Q_K$. 
\end{Example}
A subset $Y$ of quandle $X$ is said to be a \textit{subquandle} if the quandle operation of $X$ is closed in $Y$.

A quandle is said to be an \textit{involutory quandle} or a \textit{kei} if
$
(x\triangleright y)\triangleright y=x
$ for any $x,y$. A dihedral quandle is an involutory quandle for example.

A map $f:X\to Y$ between quandles is said to be a \textit{quandle homomorphism} if $f(x\triangleright y)=f(x)\triangleright f(y)$ for any $x,y\in X$. A quandle homomorphism is said to be a \textit{quandle isomorphism} if it is bijective.

\begin{Example}\label{example_trivial-hom}
Suppose $X$ be a quandle and $Y=\{*\}$ be a trivial quandle. Then, any map $f:X\to Y$ is a quandle homomorphism.
\end{Example}

\begin{Example}
By (Q2) and (Q3) of Definition \ref{def_quandle}, $S_y$ is an automorphism of a quandle $X$ for any $y\in X$. We call $S_y$ an \textit{inner automorphism} of $X$. The subgroup of $\operatorname{Aut}X$ generated by all inner automorphisms is called the inner automorphism group  and denoted by $\operatorname{Inn}X$.
\end{Example}

\begin{definition}\label{def_trivial-hom}
In this paper, a quandle homomorphism $f:X\to Y$ is said to be \textit{trivial} if the image of $f$ is a trivial subquandle of $Y$.
\end{definition}

\begin{rem}
Some literature defines a trivial homomorphism as a quandle homomorphism considered in Example \ref{example_trivial-hom}.
\end{rem}

\begin{rem}
The knot quandle, we defined in Example \ref{knot quandle_def}, is a complete knot invariant. See \cite{Joyce1982, Matveev1982,Kamadabook, Nosaka2017book} for more details.
\end{rem}

For a knot $K$, we call a quandle homomorphism from the knot quandle $Q_K$ to a quandle $X$ as an $X$-\textit{colorling} of $K$.

\subsection{Properties of SU(2)}\label{section_property_SU(2)}
Consider the 2-dimentional special unitary group
\[
SU(2)=
\left\{
\begin{pmatrix}
a && b\\
-\overline{b} && \overline{a}
\end{pmatrix}
\ :\ {}
a,b\in\mathbb{C},\ a\overline{a}+b\overline{b}=1
\right\}
\]
and its Lie algebra
\[
\mathfrak{su}(2)=
\left\{
\begin{pmatrix}
ix && z\\
-\overline{z} && -ix
\end{pmatrix}
\ :\ {}
x\in\mathbb{R},\ z\in\mathbb{C}
\right\},
\]
where $i=\sqrt{-1}$. The Lie group $SU(2)$ acts on itself via conjugation and on $\mathfrak{su}(2)$ via adjoint representation, $g\cdot X=gXg^{-1}$ for any $g\in SU(2)$, $X\in \mathfrak{su}(2)$. Let $\operatorname{exp}:\mathfrak{su}(2)\to SU(2)$ be the exponential map and $S^2(r)$ be the 2-sphere
\[
S^2(r)=
\left\{
\begin{pmatrix}
ix && z\\
-\overline{z} && -ix
\end{pmatrix}
\in\mathfrak{su}(2)
\ :\ {}
x^2+z\overline{z}=
r^2
\right\}
\]
for $r\in \mathbb{R}\setminus\{0\}$. We recall the following known facts.
\begin{prop}\label{prop_S2r_SU(2)-orbit}
For any $r\in\mathbb{R}\setminus\{0\}$, $S^2(r)$ is an $SU(2)$-orbit.
\end{prop}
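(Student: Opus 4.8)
The plan is to realize $S^2(r)$ as a single level set of the determinant and then to diagonalize. First I would record that every $X\in\mathfrak{su}(2)$ is traceless and that a direct computation gives $\det X = x^2+z\overline{z}$; hence $S^2(r)=\{X\in\mathfrak{su}(2):\det X=r^2\}$. Since $\det(gXg^{-1})=\det X$ for every $g\in SU(2)$, the adjoint action preserves each set $S^2(r)$, so the $SU(2)$-orbit of any $X\in S^2(r)$ is contained in $S^2(r)$. It then remains to prove the reverse inclusion, i.e.\ that the action is transitive on $S^2(r)$.

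For transitivity I would fix the base point $X_0=\begin{pmatrix} ir & 0\\ 0 & -ir\end{pmatrix}\in S^2(r)$ and show that every $X\in S^2(r)$ is conjugate to it. The characteristic polynomial of such an $X$ is $\lambda^2+r^2$, so its eigenvalues are $ir$ and $-ir$, which are distinct because $r\neq 0$. Being skew-Hermitian, $X$ is normal, hence unitarily diagonalizable, so there is $U\in U(2)$ with $UXU^{-1}=\operatorname{diag}(ir,-ir)=X_0$; if the two eigenvalues come out in the opposite order, I would correct this by further conjugating with the Weyl element $\begin{pmatrix}0&1\\-1&0\end{pmatrix}\in SU(2)$, which swaps the diagonal entries.

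The one genuine subtlety, and the step I expect to require the most care, is upgrading the diagonalizing matrix from $U(2)$ to $SU(2)$. Here I would exploit that scalar matrices are central: if $\det U=e^{i\theta}$, then $U'=e^{-i\theta/2}U$ satisfies $\det U'=1$ and $U'XU'^{-1}=UXU^{-1}$, since the scalar $e^{-i\theta/2}$ commutes with $X$. Thus $U'\in SU(2)$ already conjugates $X$ to $X_0$, so every element of $S^2(r)$ lies in the orbit of $X_0$. Combined with the first paragraph this yields $S^2(r)=\{gX_0g^{-1}:g\in SU(2)\}$, an $SU(2)$-orbit, as claimed.

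As an alternative I could identify $(\mathfrak{su}(2),\det)$ with Euclidean $\mathbb{R}^3$ and observe that the adjoint action factors through the double cover $SU(2)\to SO(3)$; transitivity of $SO(3)$ on the sphere of radius $r$ would then give the statement immediately. This route is conceptually cleaner, but it shifts the burden onto the surjectivity of $SU(2)\to SO(3)$, so I would favor the self-contained diagonalization argument above.
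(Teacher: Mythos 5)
Your proof is correct, and there is in fact nothing in the paper to compare it against: Proposition \ref{prop_S2r_SU(2)-orbit} sits in the preliminaries, which open with the sentence ``We recall definitions and facts using in this paper without proofs,'' so the paper simply recalls the statement as a known fact. Your argument supplies the missing verification, and every step checks out: the computation $\det X=x^2+z\overline{z}$ exhibits $S^2(r)$ as a level set of the conjugation-invariant determinant, giving one inclusion; tracelessness together with $\det X=r^2$ forces the characteristic polynomial $\lambda^2+r^2$, hence distinct eigenvalues $\pm ir$; skew-Hermitian matrices are normal, hence unitarily diagonalizable; the Weyl element $\left(\begin{smallmatrix}0&1\\-1&0\end{smallmatrix}\right)\in SU(2)$ fixes the ordering of the eigenvalues; and the central rescaling $U'=e^{-i\theta/2}U$ legitimately moves the diagonalizer from $U(2)$ into $SU(2)$ without disturbing the conjugation, which is exactly the subtle point you flagged. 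The alternative route you sketch at the end---identifying $(\mathfrak{su}(2),\det)$ with Euclidean $\mathbb{R}^3$ so that the adjoint action factors through $SO(3)$, which acts transitively on spheres of any fixed radius---is the one implicitly underlying the rest of the paper: that identification is, up to scale, the map $h$ of Theorem \ref{prop_augmented_qnd_presentation_spherical_qnd}, and the $SO(3)$ picture reappears in the proof of Lemma \ref{lem_Inn_of_(S^2(r))}. Your diagonalization argument has the advantage of being self-contained, requiring neither the surjectivity of $SU(2)\to SO(3)$ nor the Euclidean identification, so it is a perfectly good proof to insert where the paper leaves a citation.
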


\begin{prop}
\label{prop_tr_expS2r}
For any $r\in \mathbb{R}\setminus\{0\}$,
\[
\operatorname{exp}S^2(r)
=\{g\in SU(2)\ :\ {}\operatorname{tr}g=2\cos r\}.
\]
\end{prop}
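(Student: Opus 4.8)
The plan is to prove the two inclusions separately, using the quadratic relation satisfied by elements of $S^2(r)$ for the forward inclusion and the diagonalizability of elements of $SU(2)$ together with Proposition \ref{prop_S2r_SU(2)-orbit} for the reverse inclusion.

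First I would record two elementary facts about any $X=\begin{pmatrix} ix & z\\ -\overline{z} & -ix\end{pmatrix}\in S^2(r)$: that $\operatorname{tr}X=0$, and that $X^2=-r^2 I$, the latter being immediate from $x^2+z\overline{z}=r^2$. This quadratic relation lets me split the exponential series into its even and odd parts and recognize the Taylor series of cosine and sine, giving the closed form $\exp X=(\cos r)\,I+\tfrac{\sin r}{r}\,X$. Taking the trace and using $\operatorname{tr}X=0$ then yields $\operatorname{tr}(\exp X)=2\cos r$, which establishes $\exp S^2(r)\subseteq\{g\in SU(2):\operatorname{tr}g=2\cos r\}$.

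For the reverse inclusion I would take any $g\in SU(2)$ with $\operatorname{tr}g=2\cos r$. Since $g$ is unitary it is diagonalizable by some $h\in SU(2)$ (any unitary diagonalizer can be rescaled by a phase to have determinant $1$ without altering the conjugation action), and its eigenvalues form a conjugate pair $e^{\pm i\theta}$ with $2\cos\theta=\operatorname{tr}g=2\cos r$. Hence I may take the diagonal form $D=\operatorname{diag}(e^{ir},e^{-ir})$, which is exactly $\exp X_0$ for $X_0=\operatorname{diag}(ir,-ir)\in S^2(r)$. Using the conjugation-equivariance $\exp(hX_0h^{-1})=h(\exp X_0)h^{-1}=g$ together with Proposition \ref{prop_S2r_SU(2)-orbit}, which guarantees that $S^2(r)$ is a single $SU(2)$-orbit and so $hX_0h^{-1}\in S^2(r)$, I conclude $g\in\exp S^2(r)$.

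The routine parts are the matrix squaring and the rearrangement of the exponential series. The step requiring the most care is the reverse inclusion: I must ensure the diagonalizing matrix can be chosen inside $SU(2)$ rather than merely $U(2)$, so that the conjugate $hX_0h^{-1}$ genuinely lands back in the orbit $S^2(r)$, and I should check that the sign ambiguity $\theta=\pm r$ in the eigenvalues is harmless, which it is since $\operatorname{diag}(ir,-ir)$ and $\operatorname{diag}(-ir,ir)$ are conjugate in $SU(2)$ and both lie in $S^2(r)$.
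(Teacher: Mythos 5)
Your proof is correct. There is, however, nothing in the paper to compare it against: Proposition \ref{prop_tr_expS2r} appears in the Preliminaries section, which the paper prefaces with ``We recall definitions and facts using in this paper without proofs,'' so the statement is quoted as a known fact and no proof is given. Your argument is the standard one and is complete. The forward inclusion rests on the identity $X^2=-r^2I$ for $X\in S^2(r)$ (equivalently, Cayley--Hamilton with $\operatorname{tr}X=0$ and $\det X=r^2$), which yields the closed form $\exp X=(\cos r)I+\frac{\sin r}{r}X$ and hence $\operatorname{tr}(\exp X)=2\cos r$; this also works uniformly when $\sin r=0$. For the reverse inclusion, the two points you single out as needing care are indeed the right ones: rescaling a unitary diagonalizer by a phase to land in $SU(2)$ (which does not change the conjugation action), and disposing of the sign ambiguity $\theta=\pm r$ via conjugation by $\begin{pmatrix}0&1\\-1&0\end{pmatrix}\in SU(2)$; combined with the equivariance $\exp(hX_0h^{-1})=h(\exp X_0)h^{-1}$ and Proposition \ref{prop_S2r_SU(2)-orbit}, which puts $hX_0h^{-1}$ back in $S^2(r)$, the argument closes cleanly.
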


\begin{prop}\label{prop_injectivity_exp}
Let $r$ be an element of $\mathbb{R}\setminus\pi\mathbb{Z}$, and
\[
D(r)=
\begin{pmatrix}
ir && 0\\
0 && -ir
\end{pmatrix}
\in S^2(r).
\] 
Then, both of the isotropy subgroup of $SU(2)$ with respect to $D(r)\in\mathfrak{su}(2)$ and the isotropy subgroup with respect to $\operatorname{exp}D(r)\in SU(2)$ are the subgroup consisting of the entire diagonal matrices of $SU(2)$.
\end{prop}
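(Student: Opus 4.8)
The plan is to reduce both claims to a direct computation with a general element of $SU(2)$. Write an arbitrary $g\in SU(2)$ as $g=\begin{pmatrix} a & b \\ -\overline{b} & \overline{a}\end{pmatrix}$ with $a\overline{a}+b\overline{b}=1$. In each case the isotropy condition says precisely that $g$ commutes with the relevant diagonal matrix, so the strategy is to impose the commutation relation, compute both products, and read off the constraints on the off-diagonal entry $b$.

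For the isotropy subgroup of $D(r)\in\mathfrak{su}(2)$ under the adjoint action, I would rewrite $gD(r)g^{-1}=D(r)$ as $gD(r)=D(r)g$. Computing the two products and comparing off-diagonal entries yields $-irb=irb$ and $-ir\overline{b}=ir\overline{b}$, hence $2irb=0$. Since $r\neq 0$, this forces $b=0$, so $g$ is diagonal. Conversely any diagonal element of $SU(2)$ commutes with the diagonal matrix $D(r)$, so the adjoint isotropy subgroup is exactly the group of diagonal matrices of $SU(2)$.

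For the isotropy subgroup of $\exp D(r)\in SU(2)$ under conjugation, I would first compute $\exp D(r)=\begin{pmatrix} e^{ir} & 0\\ 0 & e^{-ir}\end{pmatrix}$, using that $D(r)$ is diagonal. Then $g(\exp D(r))g^{-1}=\exp D(r)$ is equivalent to $g(\exp D(r))=(\exp D(r))g$, and comparing off-diagonal entries gives $b(e^{-ir}-e^{ir})=0$, i.e.\ $-2i(\sin r)\,b=0$. Here the hypothesis $r\notin\pi\mathbb{Z}$ is exactly what guarantees $\sin r\neq 0$, forcing $b=0$; as before the diagonal matrices commute with $\exp D(r)$, giving the reverse inclusion.

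The computations are entirely routine, so the only point requiring care is the precise role of the hypothesis rather than any single step. For the adjoint case $r\neq 0$ already suffices, but for the conjugation case one genuinely needs $\sin r\neq 0$, and this is why $r$ is taken in $\mathbb{R}\setminus\pi\mathbb{Z}$ rather than merely $\mathbb{R}\setminus\{0\}$: when $r\in\pi\mathbb{Z}$ one has $\exp D(r)=\pm I$, whose centralizer is all of $SU(2)$. Thus the main (modest) obstacle is simply the bookkeeping of which hypothesis is invoked where.
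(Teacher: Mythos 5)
Your proof is correct. Note that the paper itself gives no proof of this proposition: it appears in the preliminaries under ``We recall the following known facts,'' so there is nothing to compare against; your direct computation (commutation with $D(r)$ forces $2irb=0$, commutation with $\exp D(r)=\operatorname{diag}(e^{ir},e^{-ir})$ forces $b(e^{ir}-e^{-ir})=0$) is the standard argument and supplies exactly what the paper omits. Your closing observation is also the right one: the adjoint stabilizer only needs $r\neq 0$, while the full hypothesis $r\notin\pi\mathbb{Z}$ is what keeps $\sin r\neq 0$, since for $r\in\pi\mathbb{Z}$ the matrix $\exp D(r)=\pm E$ is central and its stabilizer is all of $SU(2)$ --- which is precisely why the paper's Lemma \ref{lem_faithfulness_spherical_qnd} excludes $\pi\mathbb{Z}$.
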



\section{Spherical quandles}\label{section_spherical_quandles}
In this section, we consider subquandles of quandles defined in Example \ref{Eisermann augmented quandle}. We see that they are spherical quandles defined by Azcan-Fenn\cite{AzcanFenn1994} and by Clark-Saito\cite{ClarkSaito}.

\subsection{Definition of spherical quandles}
\begin{definition}[Azcan-Fenn \cite{AzcanFenn1994}]\label{def_spherical_quandle}
Let $\langle-,-\rangle:\mathbb{R}^3\times\mathbb{R}^3\to\mathbb{R}$ be the Euclidean inner product and $S^2$ the 2-sphere
\[
\left\{
\bm{x}=(x_1,x_2,x_3)\in \mathbb{R}^3\mid \langle \bm{x},\bm{x}\rangle=x_1^2+x_2^2+x_3^2=1
\right\}.
\]
We define the binary operation $\triangleright:S^2\times S^2\to  S^2$ as {}
$
\bm{x}\triangleright \bm{y}=2{\langle \bm{x},\bm{y}\rangle}\bm{y}-\bm{x}
$ {}
for all $\bm{x},\bm{y}\in S^2$.
Then $(S^2,\triangleright)$ is an involutory quandle and called the spherical quandle $S^2_{\mathbb{R}}$.
\end{definition}

\begin{rem}
Azcan-Fenn defined spherical quandles on the $n$-sphere $S^n$ in the same way (see 
\cite{AzcanFenn1994} or 
\cite{Nosaka2017book} for more details). However, we do not deal with them since we would like to compare with Clark-Saito's qunadles defined on the $2$-sphere $S^2$.
\end{rem}

To describe the definition of Clark-Saito's spherical quandle, we define some symbols. Suppose $\mathcal{S}^2$ is the pure unitquaternions
\[
\{a\bm{i}+b\bm{j}+c\bm{k} \mid a,b,c\in\mathbb{R},\ a^2+b^2+c^2=1\}.
\]
Then each  unit quaternion can be represented in the form
\[
\bm{e}^{\theta\bm{u}}=\cos\theta+(\sin\theta)\bm{u}
\]
with some $\bm{u}\in \mathcal{S}^2$ and $\theta\in(0,\pi)$.

\begin{definition}[Clark-Saito \cite{ClarkSaito}]
\label{def_Clark-Saito_spherical_quandle}
For $0<\psi<\pi$ we denote by $S^2_{\psi}$ the quandle with an underlying set $\mathcal{S}^2$ and a product $\bm{u}*\bm{v}=\bm{u}\operatorname{Rot}_{\psi}(\bm{v})$. We call it a spherical quandle in the sense of Clark-Saito.
\end{definition}

\begin{rem}\label{rem_ClarkSaito_lem4.4}
We identify unit quatenions with elements of $SU(2)$. Then we are able to identify spherical quandles in the sense of Clark-Saito with the subquandle of conjugacy quandle of $SU(2)$. See \cite[Lemma 4.4]{ClarkSaito}.
\end{rem}

\subsection{Subquandles of quandles in Example \ref{Eisermann augmented quandle}}
We consider the quandle defined in Example \ref{Eisermann augmented quandle} for the case $G=SU(2)$ and $\mathfrak{g}=\mathfrak{su}(2)$, and its subquandles. Let $S^2(r)$ be the subset of $\mathfrak{su}(2)$ defined in section \ref{section_property_SU(2)}. We give a few lemmas which we use in the latter sections, and a presentation of $S^2_{\mathbb{R}}$ as an augmented quandle.

\begin{lemma}
The 2-sphere $S^2(r)$ is a subquandle of $(\mathfrak{su}(2),SU(2),\operatorname{exp})$.
\end{lemma}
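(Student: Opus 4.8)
The plan is to unwind the quandle operation of the augmented quandle $(\mathfrak{su}(2),SU(2),\operatorname{exp})$ from Example \ref{Eisermann augmented quandle} and recognize it as conjugation, after which the statement follows immediately from Proposition \ref{prop_S2r_SU(2)-orbit}. By Definition \ref{def_augmented_quandle} the operation is $X\triangleright Y = X\cdot\kappa(Y)=X\cdot\operatorname{exp}(Y)$, and since the $SU(2)$-action on $\mathfrak{su}(2)$ is the adjoint right action $X\cdot g = g^{-1}Xg$, this reads $X\triangleright Y = \operatorname{exp}(Y)^{-1}\,X\,\operatorname{exp}(Y)$. The first step is thus simply to record this identification from the definitions.

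Recall that, by the definition of subquandle given just after Example \ref{knot quandle_def}, it suffices to verify closure of $\triangleright$ on $S^2(r)$; the axioms (Q1)--(Q3) of Definition \ref{def_quandle} are then inherited. So I would take $X,Y\in S^2(r)$ and show $X\triangleright Y\in S^2(r)$. Since $\operatorname{exp}(Y)\in SU(2)$, the element $X\triangleright Y = \operatorname{exp}(Y)^{-1}X\operatorname{exp}(Y)$ lies in the $SU(2)$-orbit of $X$ under the adjoint action. By Proposition \ref{prop_S2r_SU(2)-orbit}, $S^2(r)$ is a single $SU(2)$-orbit, and since $X\in S^2(r)$ this orbit is exactly $S^2(r)$; hence $X\triangleright Y\in S^2(r)$, as required. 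One may further note that the inner automorphism $S_Y$ restricts to a bijection of $S^2(r)$, its inverse being conjugation by $\operatorname{exp}(Y)$, which also preserves the orbit, so $(S^2(r),\triangleright)$ is genuinely a quandle.

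I do not expect any real obstacle here: the lemma is essentially a direct corollary of the orbit description of $S^2(r)$, which we are permitted to assume. The only point requiring a moment of care is confirming that the augmented-quandle operation truly reduces to conjugation, i.e.\ that $X\cdot\operatorname{exp}(Y)$ coincides with $\operatorname{exp}(Y)^{-1}X\operatorname{exp}(Y)$ under the adjoint action; this is immediate once the definitions are matched. The conceptual content is concentrated entirely in Proposition \ref{prop_S2r_SU(2)-orbit}.
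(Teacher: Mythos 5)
Your proof is correct and follows the same route as the paper: the paper's proof likewise appeals to Proposition \ref{prop_S2r_SU(2)-orbit} to see that the $SU(2)$-action is well-defined on $S^2(r)$, which immediately gives closure of the quandle operation. Your write-up merely makes explicit the unwinding of $X\triangleright Y=\operatorname{exp}(Y)^{-1}X\operatorname{exp}(Y)$ that the paper leaves implicit.
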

\begin{proof}
It is easy to see that the action of $SU(2)$ is well-defined on $S^2(r)$ in light of Proposition \ref{prop_S2r_SU(2)-orbit}.
\end{proof}

\begin{lemma}\label{lem_faithfulness_spherical_qnd}
For any $r\in\mathbb{R}\setminus\pi\mathbb{Z}$, the augmented quandle $(S^2(r),SU(2),\operatorname{exp}|_{S^2(r)})$ is faithful.
\end{lemma}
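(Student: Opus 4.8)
The plan is to verify that the structure map $\operatorname{exp}|_{S^2(r)}\colon S^2(r)\to SU(2)$ is injective, which is exactly the definition of faithfulness for an augmented quandle.

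First I would exploit Proposition~\ref{prop_S2r_SU(2)-orbit}: since $S^2(r)$ is a single $SU(2)$-orbit, every element of $S^2(r)$ has the form $g\cdot D(r)=gD(r)g^{-1}$ for some $g\in SU(2)$, where $D(r)$ is the diagonal element singled out in Proposition~\ref{prop_injectivity_exp}. So I would take $A=g\cdot D(r)$ and $B=h\cdot D(r)$ with $\operatorname{exp}(A)=\operatorname{exp}(B)$, and aim to deduce $A=B$.

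Next I would use equivariance of the exponential map, $\operatorname{exp}(gXg^{-1})=g\operatorname{exp}(X)g^{-1}$, to rewrite the hypothesis as $g\cdot\operatorname{exp}(D(r))=h\cdot\operatorname{exp}(D(r))$, so that $h^{-1}g$ lies in the isotropy subgroup of $\operatorname{exp}(D(r))\in SU(2)$. The crucial step is then Proposition~\ref{prop_injectivity_exp}: for $r\notin\pi\mathbb{Z}$ the isotropy subgroup of $\operatorname{exp}(D(r))$ and the isotropy subgroup of $D(r)\in\mathfrak{su}(2)$ coincide, both being the diagonal matrices of $SU(2)$. Hence $h^{-1}g$ also stabilizes $D(r)$ under the adjoint action, which gives $A=g\cdot D(r)=h\cdot D(r)=B$.

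The entire argument turns on the coincidence of these two isotropy subgroups, and this is precisely where the hypothesis $r\notin\pi\mathbb{Z}$ is indispensable. I expect this to be the only point requiring care: if $r\in\pi\mathbb{Z}$ then $\operatorname{exp}(D(r))=\pm I$ is central, so its isotropy subgroup is all of $SU(2)$ and strictly contains the isotropy of $D(r)$; in that degenerate case $\operatorname{exp}$ collapses the whole orbit $S^2(r)$ to a single matrix and faithfulness fails outright. Fortunately Proposition~\ref{prop_injectivity_exp} packages exactly the non-degenerate computation that is needed, so once it is invoked the remainder is purely formal.
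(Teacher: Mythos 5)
Your proposal is correct and follows the same route as the paper: the paper's proof is a one-line citation of Proposition~\ref{prop_injectivity_exp}, and your argument is precisely the elaboration of that citation, using Proposition~\ref{prop_S2r_SU(2)-orbit} to write elements as conjugates of $D(r)$, equivariance of $\operatorname{exp}$, and the coincidence of the two isotropy subgroups to conclude injectivity of $\operatorname{exp}|_{S^2(r)}$. Your closing observation that faithfulness genuinely fails for $r\in\pi\mathbb{Z}$ (since $\operatorname{exp}$ then collapses the orbit to $\pm E$) correctly identifies why that hypothesis is needed, a point the paper leaves implicit.
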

\begin{proof}
By Proposition \ref{prop_injectivity_exp}, the restriction of $\operatorname{exp}$ on $S^2(r)$ is injective. 
\end{proof}

To use in the proof of Lemma \ref{lem_Inn_of_(S^2(r))}, we quote the statement of Theorem 3.1 of \cite{Nosaka2017} rewritten the symbols to ours:
\begin{theorem}[Nosaka \cite{Nosaka2017}]
\label{thm_statement_Thm3.1_NSK}
Assume that a group $G$ acts on a quandle $X$ from the right, and a map $\kappa: X\to G$ satisfies the following conditions:
    \begin{enumerate}
        \item $x \triangleright y = x \cdot \kappa(y) \in X$ for any $x, y \in X$.
        \item The image $\kappa(X) \subset G$ generates the group $G$, and the action of $G$ on $X$ is effective.
    \end{enumerate}
    Then, there is an isomorphism between  $\operatorname{Inn}(X)$ and $G$, and the action of $G$ on $X$ coinsides with the natural action of $\operatorname{Inn}X$.
\end{theorem}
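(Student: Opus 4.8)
The plan is to realize $G$ concretely as a group of automorphisms of $X$ through the right translations and then identify that image with $\operatorname{Inn}(X)$. For each $g\in G$ define $R_g\colon X\to X$ by $R_g(x)=x\cdot g$. The first step is to check that every $R_g$ is a quandle automorphism: bijectivity is clear because $R_{g^{-1}}$ is its inverse, and the computation
\[
R_g(x\triangleright y)=(x\cdot\kappa(y))\cdot g=x\cdot(\kappa(y)g),\qquad R_g(x)\triangleright R_g(y)=(x\cdot g)\cdot\kappa(y\cdot g)=x\cdot(\kappa(y)g)
\]
uses only the augmented-quandle relation $\kappa(y\cdot g)=g^{-1}\kappa(y)g$ together with associativity of the action, so $R_g\in\operatorname{Aut}(X)$. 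Since this is a right action one has $R_{gh}=R_h\circ R_g$, so $g\mapsto R_g$ is an anti-homomorphism $G\to\operatorname{Aut}(X)$; composing with inversion gives an honest group homomorphism $\rho\colon G\to\operatorname{Aut}(X)$, $\rho(g)=R_{g^{-1}}$.

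Next I would pin down the kernel and the image of $\rho$. Effectiveness of the action says precisely that $R_g=\operatorname{id}$ forces $g=e$, so $\rho$ is injective. To match $\rho(G)$ with $\operatorname{Inn}(X)$ I use two inclusions. On one hand $S_y(x)=x\triangleright y=x\cdot\kappa(y)=R_{\kappa(y)}(x)$, so each generating inner automorphism $S_y$ lies in $\rho(G)$, whence $\operatorname{Inn}(X)\subseteq\rho(G)$. On the other hand, since $\kappa(X)$ generates $G$, an arbitrary $g$ is a finite product of elements $\kappa(y_i)^{\pm1}$; writing $R_g$ as the corresponding composite of the maps $S_{y_i}^{\pm1}$ (in the order dictated by the anti-homomorphism, using $R_{\kappa(y)}=S_y$) exhibits $R_g$ as an element of $\operatorname{Inn}(X)$, giving $\rho(G)\subseteq\operatorname{Inn}(X)$.

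Combining these, $\rho$ restricts to a bijective homomorphism $G\to\operatorname{Inn}(X)$, that is, a group isomorphism. The final clause, that the $G$-action agrees with the natural action of $\operatorname{Inn}(X)$, is then immediate from the construction, since the element of $\operatorname{Inn}(X)$ corresponding to $g$ is by definition the translation $R_g$, whose value at $x$ is $x\cdot g$. I expect the only genuine obstacle to be bookkeeping: keeping the right-action and anti-homomorphism conventions consistent so that a product of generators of $G$ translates to the correct composite of the $S_{y_i}$, and confirming that the isomorphism actually intertwines the two descriptions of the action rather than being merely an abstract group isomorphism.
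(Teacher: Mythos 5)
A preliminary remark on the comparison: the paper does not prove this theorem at all --- as the attribution indicates, it is quoted (with notation adjusted) from Nosaka \cite{Nosaka2017} and used as a black box in the proof of Lemma \ref{lem_Inn_of_(S^2(r))}. So your argument can only be measured against the standard proof, and your skeleton is exactly that: realize elements of $G$ as right translations $R_g(x)=x\cdot g$, observe $R_{\kappa(y)}=S_y$, get injectivity (after fixing the anti-homomorphism by inversion) from effectiveness, and get that the image is exactly $\operatorname{Inn}(X)$ from the hypothesis that $\kappa(X)$ generates $G$. That structure, including the kernel computation and the two inclusions, is sound.

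However, your first step has a flaw as written: to show that $R_g$ is a quandle homomorphism you invoke the identity $\kappa(y\cdot g)=g^{-1}\kappa(y)g$, calling it the augmented-quandle relation. That identity is \emph{not} among the hypotheses of this theorem; conditions (1) and (2) give only $x\triangleright y=x\cdot\kappa(y)$, generation, and effectiveness, whereas equivariance of $\kappa$ is part of the definition of an augmented quandle (Example \ref{def_augmented_quandle}), which is not assumed here. The cure is a rearrangement rather than a new idea, and it is already contained in your own argument for $\rho(G)\subseteq\operatorname{Inn}(X)$: since $\kappa(X)$ generates $G$, write $g=\kappa(y_1)^{\epsilon_1}\cdots\kappa(y_n)^{\epsilon_n}$; since $R_{gh}=R_h\circ R_g$ and $R_{\kappa(y)}=S_y$, the translation $R_g$ equals $S_{y_n}^{\epsilon_n}\circ\cdots\circ S_{y_1}^{\epsilon_1}$, which is a quandle automorphism because each $S_{y_i}$ is one by (Q2) and (Q3) (as the paper itself notes in the example introducing $\operatorname{Inn}X$). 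This yields $R_g\in\operatorname{Inn}(X)\subseteq\operatorname{Aut}(X)$ for every $g$ with no appeal to equivariance; and if one wants the relation $\kappa(y\cdot g)=g^{-1}\kappa(y)g$, it now \emph{follows} from effectiveness, since $x\cdot\bigl(g\kappa(y\cdot g)\bigr)=R_g(x)\triangleright R_g(y)=R_g(x\triangleright y)=x\cdot\bigl(\kappa(y)g\bigr)$ for all $x$. One last bookkeeping point: under your isomorphism $\rho(g)=R_{g^{-1}}$, the automorphism attached to $g$ sends $x$ to $x\cdot g^{-1}$, not $x\cdot g$, so the final clause about the actions coinciding is cleanest stated through the anti-isomorphism $g\mapsto R_g$ itself (that is, $x\cdot g=R_g(x)$ with $R_g\in\operatorname{Inn}(X)$). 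With these adjustments your proof is complete and correct.
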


\begin{lemma}\label{lem_Inn_of_(S^2(r))}
For any $r\in\mathbb{R}\setminus\pi\mathbb{Z}$, the inner autmorphism group of the augmented quandle $S^2(r)$ is isomorphic to $SO(3)=SU(2)/\{\pm E\}$.
\end{lemma}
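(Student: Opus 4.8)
The plan is to apply Nosaka's Theorem \ref{thm_statement_Thm3.1_NSK} to the augmented quandle $(S^2(r),SU(2),\operatorname{exp}|_{S^2(r)})$, so the main task is to verify its two hypotheses and then identify the resulting group as $SO(3)$. The first hypothesis, that $x\triangleright y=x\cdot\kappa(y)\in X$, is exactly the definition of the augmented quandle structure together with the previously proved fact that $S^2(r)$ is a subquandle, so it holds immediately. The real work lies in checking hypothesis (2): that $\operatorname{exp}(S^2(r))$ generates $SU(2)$, and that the conjugation action of $SU(2)$ on $S^2(r)$ is effective.

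For the generation claim I would use Proposition \ref{prop_tr_expS2r}, which identifies $\operatorname{exp}S^2(r)$ with the full conjugacy class $\{g\in SU(2):\operatorname{tr}g=2\cos r\}$. Since $r\in\mathbb{R}\setminus\pi\mathbb{Z}$ this conjugacy class is nontrivial (it is not $\{\pm E\}$), and I would argue that a nontrivial conjugacy class must generate all of $SU(2)$: the subgroup it generates is normal and nontrivial, and since the only proper normal subgroups of $SU(2)$ are contained in the center $\{\pm E\}$, this subgroup must be $SU(2)$ itself. Here I can point to the fact that the elements of a single conjugacy class other than $\pm E$ are not central, so they escape any central subgroup.

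For effectiveness I would compute the kernel of the action. An element $g\in SU(2)$ acts trivially on $S^2(r)$ precisely when $gXg^{-1}=X$ for every $X\in S^2(r)$, i.e. $g$ commutes with every element of $S^2(r)$. Since $S^2(r)$ spans $\mathfrak{su}(2)$ as a real vector space, such a $g$ must commute with all of $\mathfrak{su}(2)$, hence lies in the center $\{\pm E\}$ of $SU(2)$; but $\pm E$ both act trivially by conjugation, so the kernel of the action is exactly $\{\pm E\}$. Strictly speaking, Nosaka's theorem asks for an \emph{effective} action, so I would instead apply it to the quotient group $SU(2)/\{\pm E\}$ acting on $S^2(r)$, which is effective by the kernel computation, and whose image $\kappa(X)$ still generates it by the previous paragraph.

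Concluding, Theorem \ref{thm_statement_Thm3.1_NSK} then yields $\operatorname{Inn}(S^2(r))\cong SU(2)/\{\pm E\}$, and the standard identification $SU(2)/\{\pm E\}\cong SO(3)$ finishes the proof. The step I expect to be the main obstacle is the careful handling of effectiveness: since the conjugation action of $SU(2)$ itself is \emph{not} effective (the center acts trivially), one cannot apply Nosaka's theorem directly to $SU(2)$ but must pass to the quotient $SO(3)$ first, and I would want to confirm that $\kappa$ descends sensibly and that its image still generates the quotient.
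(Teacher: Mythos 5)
Your proof is correct, and its overall skeleton coincides with the paper's: both pass from the non-effective conjugation action of $SU(2)$ to the effective action of $SO(3)=SU(2)/\{\pm E\}$ on $S^2(r)$ and then invoke Theorem \ref{thm_statement_Thm3.1_NSK}. The genuine difference is in how the generation hypothesis is verified. The paper cites the Cartan--Dieudonn\'{e} theorem: under the identification $\mathfrak{su}(2)\cong\mathbb{R}^3$, the image of $\operatorname{exp}|_{S^2(r)}$ in $SO(3)$ consists of rotations through a fixed angle about arbitrary axes, and these generate $SO(3)$. You instead use Proposition \ref{prop_tr_expS2r} to view $\operatorname{exp}S^2(r)$ as a non-central conjugacy class of $SU(2)$, observe that the subgroup it generates is normal, and invoke the fact that every proper normal subgroup of $SU(2)$ is central; projecting to the quotient then gives generation of $SO(3)$. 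Your route is purely group-theoretic and self-contained (it works verbatim for any non-central conjugacy class in a connected, almost-simple compact group), at the cost of needing the classification of normal subgroups of $SU(2)$; the paper's route is a one-line appeal to a classical geometric theorem and identifies the generators concretely as fixed-angle rotations. Your handling of effectiveness --- computing the kernel of the $SU(2)$-action to be $\{\pm E\}$ because $S^2(r)$ spans $\mathfrak{su}(2)$ --- is equivalent to the paper's observation that $SO(3)$ acts effectively on $\mathbb{R}^3$, and your care in descending $\kappa$ to the quotient is exactly the step the paper performs in its opening sentence, where it notes that $\operatorname{Inn}S^2(r)$ embeds in $SO(3)$ before verifying Nosaka's hypotheses for the $SO(3)$-action.
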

\begin{proof}
The inner automorphism group of $(S^2(r),SU(2),\operatorname{exp}|_{S^2(r)})$ is isomorphic to a subgroup of $SO(3)=SU(2)/\{\pm E\}$ since the quandle operation of $S^2(r)$ is induced by the adjoint action of $SU(2)$ on $\mathfrak{su}(2)$.

Thus it is sufficient to see that the action of $SO(3)$ on $S^2(r)$ satisfies the conditions of Theorem \ref{thm_statement_Thm3.1_NSK}.

By the second condition of Example \ref{def_augmented_quandle}, the first condition of Theorem \ref{thm_statement_Thm3.1_NSK} is satisfied. By Cartan–Dieudonn\'{e} theorem (see \cite[Chapter I Section 10]{Cartan1966}), the former of the second condition of Theorem \ref{thm_statement_Thm3.1_NSK} is satisfied. By Proposition \ref{prop_injectivity_exp}, the action of $SO(3)=SU(2)/\{\pm E\}$ on the 2-sphere $S^2(r)$ is effective since the $SO(3)$-action on $\mathbb{R}^3$ via linear transformation is effective.
\end{proof}

\begin{rem}
The method of the proof of Lemma \ref{lem_Inn_of_(S^2(r))} is same as the proof of \cite[Lemma 4.9]{Nosaka2017}.
\end{rem}

\begin{rem}\label{rem_ClarkSaito-qunadle}
By Lemma \ref{lem_faithfulness_spherical_qnd} and Remark \ref{rem_ClarkSaito_lem4.4}, the spherical quandle in the sense of Clark-Saito $S^2_{2\pi-2r}$ is isomorphic to our augmented quandle $S^2(r)$ for $r\in(0,\pi)$.
\end{rem}

\subsection{Presentation of $S^2_{\mathbb{R}}$ as an augmented quandle}
We give a presentation of $S^2_{\mathbb{R}}$ as an augmented quandle and prove the spherical quandle $S^2_{\mathbb{R}}$ is isomorphic to the spherical quandle $S^2_{\pi}$. 
\begin{theorem}\label{prop_augmented_qnd_presentation_spherical_qnd}
The map $h:
S_{\mathbb{R}}^2\to (S^2(\pi /2),SU(2),\operatorname{exp}|_{S^2(\pi/2)})$
defined by
\[
h(x_1,x_2,x_3)=
\frac{\pi}{2}
\begin{pmatrix}
x_1 i && x_2+x_3 i\\
-x_2+x_3 i && -x_1 i
\end{pmatrix}
\]
is a quandle isomorphism.
\end{theorem}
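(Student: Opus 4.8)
The plan is to factor $h$ as $h=\tfrac{\pi}{2}\phi$, where $\phi\colon\mathbb{R}^3\to\mathfrak{su}(2)$ is the linear map sending $(x_1,x_2,x_3)$ to the matrix appearing in the statement (without the scalar $\tfrac{\pi}{2}$). First I would record that $\phi$ is a linear isomorphism onto $\mathfrak{su}(2)$: it sends the standard basis of $\mathbb{R}^3$ to a basis of $\mathfrak{su}(2)$ and multiplies the norm $\sqrt{x^2+z\overline{z}}$ by the factor coming from the scaling, so $h$ carries the unit sphere $S^2$ bijectively onto $S^2(\pi/2)$. Thus $h$ is a bijection, and it only remains to verify the homomorphism property $h(\bm x\triangleright\bm y)=h(\bm x)\triangleright h(\bm y)$.

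To make the operation on $S^2(\pi/2)$ explicit, I would compute the exponential there. Writing $Y=\tfrac{\pi}{2}u$ with $u\in\mathfrak{su}(2)$ satisfying $u^2=-E$, the standard formula gives $\operatorname{exp}(Y)=\cos\tfrac{\pi}{2}\,E+\sin\tfrac{\pi}{2}\,u=u$, and since $u^{-1}=-u$ the augmented-quandle operation becomes $X\triangleright Y=\operatorname{exp}(Y)^{-1}X\operatorname{exp}(Y)=-uXu$, where $u=\tfrac{2}{\pi}Y=\phi(\bm y)$ when $Y=h(\bm y)$. Using the linearity of $\phi$ on the Azcan--Fenn side, the identity to be proved then reduces, after cancelling the common factor $\tfrac{\pi}{2}$, to the purely algebraic statement
\[
2\langle\bm x,\bm y\rangle\,\phi(\bm y)-\phi(\bm x)=-\phi(\bm y)\,\phi(\bm x)\,\phi(\bm y)
\qquad(\bm y\in S^2).
\]

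The heart of the argument, and the step I expect to be the main obstacle, is this matrix identity. I would prove it by first checking that the basis $e_1=\phi(1,0,0)$, $e_2=\phi(0,1,0)$, $e_3=\phi(0,0,1)$ obeys the quaternion relations $e_1^2=e_2^2=e_3^2=-E$ and $e_1e_2=e_3$ (cyclically), which yields the multiplication rule $\phi(\bm a)\phi(\bm b)=-\langle\bm a,\bm b\rangle\,E+\phi(\bm a\times\bm b)$ for all $\bm a,\bm b\in\mathbb{R}^3$. Applying this rule twice to $\phi(\bm y)\phi(\bm x)\phi(\bm y)$, using $\langle\bm y,\bm x\times\bm y\rangle=0$ together with the vector triple product $\bm y\times(\bm x\times\bm y)=\bm x-\langle\bm x,\bm y\rangle\bm y$ (where $\langle\bm y,\bm y\rangle=1$ since $\bm y\in S^2$), collapses the product to $\phi(\bm x)-2\langle\bm x,\bm y\rangle\phi(\bm y)$, which is precisely the required identity after negation.

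Conceptually, the identity expresses that conjugation by $u=\operatorname{exp}(h(\bm y))$ acts on $\mathfrak{su}(2)\cong\mathbb{R}^3$ as the rotation by angle $\pi$ about the axis $\bm y$ (the image of $u$ in $SO(3)$, cf.\ Lemma~\ref{lem_Inn_of_(S^2(r))}), and rotation by $\pi$ about a line through the origin agrees with the reflection $\bm x\mapsto 2\langle\bm x,\bm y\rangle\bm y-\bm x$ across that line; recognizing this correspondence is what motivates the computation. Combining this homomorphism property with the bijectivity established at the outset shows that $h$ is a quandle isomorphism.
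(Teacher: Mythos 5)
Your proof is correct and takes essentially the same route as the paper's: establish bijectivity, compute $\operatorname{exp}h(\bm{y})=\tfrac{2}{\pi}h(\bm{y})$ (your formula $\operatorname{exp}(\tfrac{\pi}{2}u)=u$ for $u^2=-E$ is exactly the paper's eigenvalue/diagonalizability argument), and then verify the conjugation identity $h(\bm{x}\triangleright\bm{y})=\operatorname{exp}(h(\bm{y}))^{-1}h(\bm{x})\operatorname{exp}(h(\bm{y}))$. The only difference is one of completeness: you carry out that final verification via the quaternionic multiplication rule $\phi(\bm{a})\phi(\bm{b})=-\langle\bm{a},\bm{b}\rangle E+\phi(\bm{a}\times\bm{b})$ and the vector triple product, whereas the paper simply asserts the resulting matrix equality, leaving the entry-wise computation to the reader.
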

\begin{proof}
It is sufficient to see that $h$ is a quandle homomorphism, because $h$ is obviously bijective. For any $\bm{y}=(y_1,y_2,y_3)\in S^2_{\mathbb{R}}$, the eigenvalues of $h(\bm{y})$ are $\pm \frac{\pi}{2}i$. Therefore, $h(\bm{y})$ is diagonalizable and
\[
\operatorname{exp}h(\bm{y})
=
\begin{pmatrix}
y_1 i && y_2+y_3 i\\
-y_2+y_3 i && -y_1 i
\end{pmatrix}
=\frac{2}{\pi}h(\bm{y})
.
\]
Hence, for any $\bm{x}=(x_1,x_2,x_3),\bm{y}=(y_1,y_2,y_3)\in S^2_{\mathbb{R}}$,
\begin{eqnarray*}
h(\bm{x}\triangleright\bm{y})
=
\frac{\pi}{2}
\begin{pmatrix}
\alpha i& \beta+\gamma i\\
-\beta+\gamma i & -\alpha i
\end{pmatrix}
=h(\bm{x})\triangleright h(\bm{y}),
\end{eqnarray*}
where $\alpha=2\langle\bm{x},\bm{y}\rangle y_1-x_1$, 
$\beta=2\langle\bm{x},\bm{y}\rangle y_2-x_2$ and $\gamma=2\langle\bm{x},\bm{y}\rangle y_3-x_3$.
\end{proof}

\begin{rem}
In light of Theorem \ref{prop_augmented_qnd_presentation_spherical_qnd}, \cite[Lemma 4.9]{Nosaka2017} is obtained from Lemma \ref{lem_Inn_of_(S^2(r))} considering the case $r=\frac{\pi}{2}$.
\end{rem}

By Remark \ref{rem_ClarkSaito-qunadle} and Proposition \ref{prop_augmented_qnd_presentation_spherical_qnd}, the quandle $S^2_{\pi}$ defined in Definition \ref{def_Clark-Saito_spherical_quandle} is isomorphic to the quandle $S^2_{\mathbb{R}}$ defined in Definition \ref{def_spherical_quandle}.

\section{One-to-one correspondence}\label{section_1to1correspondence}
In this section, we give a one-to-one correspondence between $SU(2)$-representations of knot groups and colorings of knots with spherical quandles. It extends the suggetion by Azcan-Fenn \cite{AzcanFenn1994}. We use Nosaka's work to give the correspondence.

Let $K$ be a tame knot in the 3-sphere $S^3$, $\pi_K=\pi_1(S^3\setminus K)$ the knot group of $K$ and $Q_K$ the knot quandle. For any augmented quandle $(X,G,\kappa)$, we define a set
\[
R(K,G)=\{
f\in\operatorname{Hom}(\pi_K,G)
\ :\ {}
{}^{\exists}x\in X,\ f(\mathfrak{m})=\kappa(x)
\}.
\]
\begin{theorem}[Nosaka \cite{Nosaka2015}]\label{thm_Nosaka_correspondence}
Let $(X,G,\kappa)$ be a faithful augmented quandle. Then, there is a bijection
\[
\Psi:\operatorname{Hom}(Q_K,X)
\xrightarrow{\sim}
R(K,G).
\]
\end{theorem}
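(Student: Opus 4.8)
The plan is to construct the map $\Psi$ and its inverse explicitly, using augmented-quandle structures on both sides. First I would record the structure on $Q_K$: setting $\kappa_K(Hx)=x^{-1}\mathfrak{m}x$ and letting $\pi_K$ act on $H\backslash\pi_K$ by right multiplication $Hx\cdot g=Hxg$, one checks that $(Q_K,\pi_K,\kappa_K)$ is an augmented quandle in the sense of Example \ref{def_augmented_quandle} inducing the operation of Example \ref{knot quandle_def}; here $\kappa_K$ is well defined because every element of $H=\langle\mathfrak{m},\mathfrak{l}\rangle$ commutes with $\mathfrak{m}$. Two facts about this structure are used repeatedly: the image $\kappa_K(Q_K)$, consisting of all conjugates of $\mathfrak{m}$, generates $\pi_K$ since $\pi_K$ is the normal closure of a meridian; and $Hx=(H\cdot 1)\cdot x$, so that the base coset $q_0=H\cdot 1$ generates $Q_K$ under the action.

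For the forward map, given $\phi\in\operatorname{Hom}(Q_K,X)$ I would define $\Psi(\phi)=f$ to be the group homomorphism determined by $f(\kappa_K(q))=\kappa(\phi(q))$. To see that such an $f$ exists, I would use a Wirtinger presentation of $\pi_K$: its generators are arc meridians $m_i=\kappa_K(q_i)$ and its relations are conjugations coming from quandle relations $q_k=q_i\triangleright q_j$; setting $f(m_i)=\kappa(\phi(q_i))$ respects each relation because $\kappa(\phi(q_i\triangleright q_j))=\kappa(\phi(q_j))^{-1}\kappa(\phi(q_i))\kappa(\phi(q_j))$ by the quandle-homomorphism property together with the first augmented-quandle axiom. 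Uniqueness of $f$ follows since $\kappa_K(Q_K)$ generates $\pi_K$, and $f(\mathfrak{m})=\kappa(\phi(q_0))$ lies in $\kappa(X)$, so $f\in R(K,G)$.

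For the inverse I would send $f\in R(K,G)$, with $f(\mathfrak{m})=\kappa(x_0)$ (the element $x_0$ being unique by faithfulness), to the map $\phi_f(Hx)=x_0\cdot f(x)$. The main obstacle is the well-definedness of $\phi_f$: if $Hx=Hx'$ then $x'=hx$ with $h\in H=\langle\mathfrak{m},\mathfrak{l}\rangle$, so I must show $f(H)$ fixes $x_0$. Since $x_0\cdot f(\mathfrak{m})=x_0\cdot\kappa(x_0)=x_0$ by the second augmented-quandle axiom, the meridian causes no trouble; the delicate case is the longitude, and this is exactly where faithfulness is indispensable. Because $\mathfrak{l}$ commutes with $\mathfrak{m}$ in $\pi_K$, the element $f(\mathfrak{l})$ commutes with $\kappa(x_0)$, whence
\[
\kappa(x_0\cdot f(\mathfrak{l}))=f(\mathfrak{l})^{-1}\kappa(x_0)f(\mathfrak{l})=f(\mathfrak{l})^{-1}f(\mathfrak{m})f(\mathfrak{l})=f(\mathfrak{l}^{-1}\mathfrak{m}\mathfrak{l})=f(\mathfrak{m})=\kappa(x_0),
\]
and injectivity of $\kappa$ forces $x_0\cdot f(\mathfrak{l})=x_0$. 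Thus $f(H)$ fixes $x_0$ and $\phi_f$ is well defined; that it is a quandle homomorphism is then a direct computation showing $\phi_f(Hx\triangleright Hy)=\phi_f(Hx)\triangleright\phi_f(Hy)$ from the two axioms.

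Finally I would check that the two composites are the identity. The key auxiliary fact is the equivariance $\phi(q\cdot g)=\phi(q)\cdot f(g)$ for $f=\Psi(\phi)$, which holds on the generators $g=\kappa_K(q')$ precisely because $\phi$ is a quandle homomorphism, and then on all of $\pi_K$ since $\kappa_K(Q_K)$ generates. Evaluating at $q=q_0$ gives $\phi(Hx)=\phi(q_0)\cdot f(x)=x_0\cdot f(x)$, so $\phi_{\Psi(\phi)}=\phi$; conversely, for $f\in R(K,G)$ one computes $\Psi(\phi_f)(\kappa_K(Hx))=\kappa(x_0\cdot f(x))=f(x)^{-1}f(\mathfrak{m})f(x)=f(x^{-1}\mathfrak{m}x)=f(\kappa_K(Hx))$, and agreement on the generating set $\kappa_K(Q_K)$ gives $\Psi(\phi_f)=f$. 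This establishes the bijection, with the longitude well-definedness above as the one genuinely nontrivial point.
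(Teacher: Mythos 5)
Your proposal is correct, but there is no in-paper proof to compare it against: the paper states Theorem \ref{thm_Nosaka_correspondence} as a quoted result of Nosaka \cite{Nosaka2015} and proves nothing, recording only a description of the forward map ($\Psi f$ is the group homomorphism with $\Psi f(\alpha)=\kappa\circ f(\alpha)$ on the arc generators $\alpha$). Your forward construction is exactly that description made rigorous via the Wirtinger presentation, so on that side you agree with the paper; everything else — the augmented-quandle structure $\kappa_K(Hx)=x^{-1}\mathfrak{m}x$ on $Q_K$, well defined because the peripheral subgroup $H=\langle\mathfrak{m},\mathfrak{l}\rangle$ is abelian, the inverse $\phi_f(Hx)=x_0\cdot f(x)$, and the identification of faithfulness as the hypothesis that forces $x_0\cdot f(\mathfrak{l})=x_0$ — is supplied by you and is sound. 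You correctly isolate the one genuinely delicate point: $R(K,G)$ only constrains $f(\mathfrak{m})$, so well-definedness of $\phi_f$ on cosets hinges on the longitude, and there injectivity of $\kappa$ together with $[\mathfrak{m},\mathfrak{l}]=1$ is indispensable. Three small patches would make the write-up airtight: (i) at crossings of the opposite sign the Wirtinger relation reads $m_k=m_jm_im_j^{-1}$ and the quandle relation $q_i=q_k\triangleright q_j$, so the relation check should be stated for both signs (it is the same one-line computation); (ii) in concluding that all of $f(H)$ fixes $x_0$ from the two generators, note that $\{g\in\pi_K : x_0\cdot f(g)=x_0\}$ is a subgroup of $\pi_K$ containing $\mathfrak{m}$ and $\mathfrak{l}$, hence containing $H$; (iii) in the final composite check, the equality $\phi(q_0)=x_0$ itself uses faithfulness, since both elements have image $f(\mathfrak{m})$ under $\kappa$.
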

The bijection $\Psi$ is given as follows. Suppose $D$ is a diagram of $K$. It is known that both $\pi_K$ and $Q_K$ are generated by the elements corresponding to the arcs of $D$ (see \cite{Kamadabook}). For any quandle homomorphism $f:Q_K\to X$, $\Psi f:\pi_K\to G$ is a group homomorphism satisfying {}
$
\Psi f(\alpha)=\kappa\circ f(\alpha)
$ {}
for any $\alpha$ of $Q_K$ corresponding to an arc of $D$.

We give a few facts about the bijection $\Psi$.
\begin{lemma}\label{lem_Nosaka-1to1_correspondence}
\begin{enumerate}
    \item The action of $G$ on $X$ induces the action of $G$ on $\operatorname{Hom}(Q_K,X)$. Quandle homomorphisms $f,g:Q_K\to X$ are in the same $G$-orbit if and only if $\Psi f$ and $\Psi g$ are conjugate.
    \item A quandle homomorphism $f:Q_K\to X$ is trivial in the sense of Definition \ref{def_trivial-hom} if and only if $\Psi f$ is an abelian representation.
\end{enumerate}
\end{lemma}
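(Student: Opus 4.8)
The plan is to treat the two parts separately, with part (1) resting on a direct computation and part (2) on the interaction between $\kappa$ and the quandle operation together with faithfulness.

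For part (1), first I would define the $G$-action on $\operatorname{Hom}(Q_K,X)$ by $(f\cdot g)(q)=f(q)\cdot g$ for $g\in G$ and $q\in Q_K$, and check that $f\cdot g$ is again a quandle homomorphism. The defining properties of an augmented quandle give $(x\cdot g)\triangleright(y\cdot g)=(x\cdot g)\cdot\kappa(y\cdot g)=(x\cdot g)\cdot g^{-1}\kappa(y)g=(x\cdot\kappa(y))\cdot g=(x\triangleright y)\cdot g$, so $(f\cdot g)(p\triangleright q)=(f(p)\triangleright f(q))\cdot g=(f\cdot g)(p)\triangleright(f\cdot g)(q)$ and the action on morphisms is well defined. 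Then I would compute $\Psi(f\cdot g)$ on an arc generator $\alpha$: since $\Psi(f\cdot g)(\alpha)=\kappa((f\cdot g)(\alpha))=\kappa(f(\alpha)\cdot g)=g^{-1}\kappa(f(\alpha))g=g^{-1}\Psi f(\alpha)g$, and the arcs generate $\pi_K$, the two group homomorphisms $\Psi(f\cdot g)$ and $g^{-1}(\Psi f)g$ agree on a generating set and hence coincide, giving $\Psi(f\cdot g)=g^{-1}(\Psi f)g$. The equivalence is then immediate: $f$ and $g$ lie in one $G$-orbit, say $g=f\cdot h$, exactly when $\Psi g=h^{-1}(\Psi f)h$, and since $\Psi$ is a bijection this is equivalent to $\Psi f$ and $\Psi g$ being conjugate, the injectivity of $\Psi$ being what powers the converse direction.

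For part (2), the key algebraic identity is $\kappa(x\triangleright y)=\kappa(x\cdot\kappa(y))=\kappa(y)^{-1}\kappa(x)\kappa(y)$. From this, for $a,b\in X$ one has $a\triangleright b=a\iff\kappa(a\triangleright b)=\kappa(a)\iff\kappa(a)\kappa(b)=\kappa(b)\kappa(a)$, where the first equivalence uses injectivity of $\kappa$ (faithfulness) in the nontrivial direction. Thus the image $f(Q_K)$ is a trivial subquandle precisely when $\kappa(a)$ and $\kappa(b)$ commute for all $a,b\in f(Q_K)$. For the forward direction, if $f$ is trivial then the arc-images satisfy this, so the generators $\Psi f(\alpha)=\kappa(f(\alpha))$ of $\operatorname{Im}\Psi f$ pairwise commute and $\Psi f$ is abelian. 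For the converse I would first show $\kappa(f(Q_K))\subseteq\operatorname{Im}\Psi f$ by induction on the quandle-word length of an element of $Q_K$ (recall the arcs generate $Q_K$ as a quandle): the base case holds because $\kappa(f(\alpha))=\Psi f(\alpha)$, and the inductive step uses $\kappa(f(p\triangleright q))=\kappa(f(q))^{-1}\kappa(f(p))\kappa(f(q))$, which remains in the subgroup $\operatorname{Im}\Psi f$. If $\Psi f$ is abelian, then every pair $\kappa(a),\kappa(b)$ with $a,b\in f(Q_K)$ commutes, so by the equivalence above $f(Q_K)$ is trivial.

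I expect the main obstacle to be the converse of part (2), specifically making precise that $\kappa(f(Q_K))$ lands inside $\operatorname{Im}\Psi f$ so that the abelian hypothesis can be applied to all of $f(Q_K)$ and not merely to the arc generators; this is exactly where both faithfulness and the fact that the arcs generate $Q_K$ as a quandle are indispensable. Part (1) and the forward direction of part (2) are essentially formal consequences of the augmented-quandle axioms and the formula $\Psi f(\alpha)=\kappa(f(\alpha))$.
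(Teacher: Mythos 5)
Your proof is correct, but note that there is nothing in the paper to compare it against: the paper states this lemma bare, as ``a few facts about the bijection $\Psi$,'' implicitly deferring it to Nosaka's work \cite{Nosaka2015}, and gives no proof at all. Your argument therefore supplies a self-contained verification of something the paper leaves to the reader. Both halves are sound. In part (1), the computation $(x\cdot g)\triangleright(y\cdot g)=(x\triangleright y)\cdot g$ from the augmented-quandle axioms makes the $G$-action on $\operatorname{Hom}(Q_K,X)$ well defined; the identity $\Psi(f\cdot g)=g^{-1}(\Psi f)g$ follows because both sides are group homomorphisms of $\pi_K$ agreeing on the arc generators; and injectivity of $\Psi$ correctly converts conjugacy of representations back into membership in a single orbit. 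In part (2), the equivalence $a\triangleright b=a\iff\kappa(a)\kappa(b)=\kappa(b)\kappa(a)$, with faithfulness powering the nontrivial direction, is exactly the right reduction, and the inclusion $\kappa(f(Q_K))\subseteq\operatorname{Im}\Psi f$ is precisely the bridge needed to apply the abelian hypothesis to all of $f(Q_K)$ rather than only to the arc images.

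One small completeness point in that last induction: the arcs generate $Q_K$ as a quandle, which means closure under both $\triangleright$ and the inverse operation $\triangleright^{-1}$, whereas your inductive step only treats words of the form $p\triangleright q$. The missing case costs one line: if $z=p\triangleright^{-1}q$, then $z\triangleright q=p$ gives $\kappa(f(p))=\kappa(f(q))^{-1}\kappa(f(z))\kappa(f(q))$, hence $\kappa(f(z))=\kappa(f(q))\kappa(f(p))\kappa(f(q))^{-1}$, which again lies in the subgroup $\operatorname{Im}\Psi f$. With that sentence added, the argument is complete.
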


\begin{theorem}\label{main_theorem}
For any $r\in(0,\pi)$, there is a bijection
\begin{eqnarray*}
\Phi_{K, r}:
\operatorname{Hom}(Q_K,S^2(r))
\xrightarrow{\sim}
\left\{
\rho\in\operatorname{Hom}(\pi_K,SU(2))
:
\operatorname{tr}\rho(\mathfrak{m})=2\cos r
\right\}.
\end{eqnarray*}
\end{theorem}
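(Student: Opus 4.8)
The plan is to recognize the desired map $\Phi_{K,r}$ as a special instance of Nosaka's bijection $\Psi$ from Theorem \ref{thm_Nosaka_correspondence}, applied to a well-chosen faithful augmented quandle, and then to rewrite the target set $R(K,SU(2))$ using the trace description of $\exp S^2(r)$. In outline, the whole argument is a reduction to results already established in the excerpt, together with one scalar reformulation of the meridian condition.

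First I would fix the augmented quandle $(X,G,\kappa)=(S^2(r),SU(2),\operatorname{exp}|_{S^2(r)})$. Since $r\in(0,\pi)\subset\mathbb{R}\setminus\pi\mathbb{Z}$, Lemma \ref{lem_faithfulness_spherical_qnd} guarantees that this augmented quandle is faithful, so Theorem \ref{thm_Nosaka_correspondence} applies and produces a bijection
\[
\Psi:\operatorname{Hom}(Q_K,S^2(r))\xrightarrow{\sim}R(K,SU(2)),
\]
where, by the definition of $R(K,G)$ specialized to $G=SU(2)$,
\[
R(K,SU(2))=\{f\in\operatorname{Hom}(\pi_K,SU(2))\ :\ {}^{\exists}x\in S^2(r),\ f(\mathfrak{m})=\operatorname{exp}(x)\}.
\]

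Next I would identify this set with the one appearing in the statement. The condition ``there exists $x\in S^2(r)$ with $f(\mathfrak{m})=\operatorname{exp}(x)$'' says precisely that $f(\mathfrak{m})\in\operatorname{exp}S^2(r)$. By Proposition \ref{prop_tr_expS2r} we have $\operatorname{exp}S^2(r)=\{g\in SU(2):\operatorname{tr}g=2\cos r\}$, so this membership is equivalent to the single scalar equation $\operatorname{tr}f(\mathfrak{m})=2\cos r$. Hence $R(K,SU(2))$ coincides with the target set of the theorem (up to the harmless change of variable name from $f$ to $\rho$), and setting $\Phi_{K,r}=\Psi$ completes the construction of the bijection.

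Since this is essentially a bookkeeping reduction to Nosaka's theorem, I do not expect a serious obstacle; the only point genuinely requiring care is verifying that the existential condition defining $R(K,SU(2))$ really collapses to the trace equation, which is exactly the content of Proposition \ref{prop_tr_expS2r}. One might additionally wish to note that $\Phi_{K,r}$ is well-defined independently of the chosen diagram $D$, but this is inherited directly from the corresponding property of $\Psi$.
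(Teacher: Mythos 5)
Your proposal is correct and follows essentially the same route as the paper's own proof: faithfulness from Lemma \ref{lem_faithfulness_spherical_qnd}, Nosaka's bijection from Theorem \ref{thm_Nosaka_correspondence}, and the identification of $R(K,SU(2))$ with the fixed-trace representation set via Proposition \ref{prop_tr_expS2r}. You merely spell out explicitly the collapse of the existential condition to the trace equation, which the paper leaves implicit in the phrase ``in light of Proposition \ref{prop_tr_expS2r}.''
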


\begin{proof}
By Lemma \ref{lem_faithfulness_spherical_qnd}, $(S^2(r),SU(2),\operatorname{exp})$ is faithful. Thus, we have the bijection $\Phi_{K, r}$ in light of Proposition \ref{prop_tr_expS2r} and Theorem \ref{thm_Nosaka_correspondence}. By Lemma \ref{lem_Nosaka-1to1_correspondence}, we have the following properties.
\end{proof}
By Lemma \ref{lem_Nosaka-1to1_correspondence}, we have the following properties.
\begin{enumerate}
    \item The action of $SU(2)$ on $S^2(r)$ induces the action of $SU(2)$ on $\operatorname{Hom}(Q_K,S^2(r))$. Then, quandle homomorphisms $f,g:Q_K\to (S^2(r),SU(2),\operatorname{exp})$ are in the same $SU(2)$-orbit if and only if $\Phi_{K, r}f$ and $\Phi_{K, r} g$ are conjugate.
    \item A quandle homomorphism $f:Q_K\to (S^2(r),SU(2),\operatorname{exp})$ is trivial in the sense of Definition \ref{def_trivial-hom} if and only if $\Phi_{K, r}f$ is abelian.
\end{enumerate}

\begin{table}[t]
  \begin{tabular}{ccc}\hline
  \begin{tabular}{c}
   Fixed-trace\\ $SU(2)$-rep. of $\pi_K$
  \end{tabular}
    & Interpretation by $\Phi_{K, r}$ & Reference \\\hline\hline
    rep. satisfying $\operatorname{tr}\rho(\mathfrak{m})=2\cos r$ & $S^2(r)$-colorings &
Thm. \ref{main_theorem}    
    \\\hline
    abelian rep. & 
    \begin{tabular}{c}
trivial colorings in the \\sense
of Definition \ref{def_trivial-hom}
    \end{tabular}
    & Thm. \ref{main_theorem}
    \\\hline
    conjugate & in the same $\operatorname{Inn}S^2(r)$-orbit & Rem. \ref{rem_SO(3)-orbit}\\\hline
  \end{tabular}
  \caption{Summary of the discussion in section \ref{section_1to1correspondence}}
  \label{table_summary_general}
\end{table}

\begin{rem}
Theorem \ref{main_theorem} extends the argument \cite[Lemma 4.1]{AzcanFenn1994} in light of Proposition \ref{prop_augmented_qnd_presentation_spherical_qnd}.
\end{rem}

\begin{rem}\label{rem_SO(3)-orbit}
By Theorem \ref{thm_statement_Thm3.1_NSK} and Lemma \ref{lem_Inn_of_(S^2(r))}, $SU(2)$-orbits of $\operatorname{Hom}(Q_K,S^2(r))$ coincide with $SO(3)\cong\operatorname{Inn}S^2(r)$-orbits of it.
\end{rem}

We restate the theorem in the case of $S^2_{\mathbb{R}}$.
\begin{cor}\label{cor_1to1correspondence_spherical_quandles}
There is a bijection
\begin{eqnarray*}
\Phi_{K, S^2}:
\operatorname{Hom}(Q_K,S^2_{\mathbb{R}})
\xrightarrow{\sim}
\left\{
\rho\in\operatorname{Hom}(\pi_K,SU(2))
:
\operatorname{tr}\rho(\mathfrak{m})=0
\right\}.
\end{eqnarray*}
\end{cor}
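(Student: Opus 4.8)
The plan is to deduce this as a direct consequence of Theorem \ref{main_theorem} together with the quandle isomorphism produced in Theorem \ref{prop_augmented_qnd_presentation_spherical_qnd}. The target set on the right-hand side is exactly the fixed-trace set appearing in the main theorem for the special value $r=\pi/2$, since $2\cos(\pi/2)=0$. So the first step is to specialize: setting $r=\pi/2$ in Theorem \ref{main_theorem} yields a bijection
\[
\Phi_{K,\pi/2}:
\operatorname{Hom}(Q_K,S^2(\pi/2))
\xrightarrow{\sim}
\left\{
\rho\in\operatorname{Hom}(\pi_K,SU(2))
:
\operatorname{tr}\rho(\mathfrak{m})=0
\right\}.
\]
This already matches the codomain we want; it remains only to replace the domain $\operatorname{Hom}(Q_K,S^2(\pi/2))$ by $\operatorname{Hom}(Q_K,S^2_{\mathbb{R}})$.

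For that, I would invoke the quandle isomorphism $h:S^2_{\mathbb{R}}\to S^2(\pi/2)$ of Theorem \ref{prop_augmented_qnd_presentation_spherical_qnd}. Post-composition with $h$ sends a quandle homomorphism $f:Q_K\to S^2_{\mathbb{R}}$ to $h\circ f:Q_K\to S^2(\pi/2)$, which is again a quandle homomorphism because $h$ preserves the quandle operation. This defines a map
\[
h_*:\operatorname{Hom}(Q_K,S^2_{\mathbb{R}})\to\operatorname{Hom}(Q_K,S^2(\pi/2)),\qquad h_*(f)=h\circ f .
\]
Since $h$ is a quandle isomorphism, post-composition with its inverse $h^{-1}$ provides a two-sided inverse $(h^{-1})_*$ to $h_*$, so $h_*$ is a bijection. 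This is the only point requiring verification, and it is the routine functoriality of $\operatorname{Hom}(Q_K,-)$ under quandle isomorphisms rather than a genuine obstacle.

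Finally I would define $\Phi_{K,S^2}:=\Phi_{K,\pi/2}\circ h_*$ as the composite
\[
\operatorname{Hom}(Q_K,S^2_{\mathbb{R}})
\xrightarrow{\ h_*\ }
\operatorname{Hom}(Q_K,S^2(\pi/2))
\xrightarrow{\ \Phi_{K,\pi/2}\ }
\left\{
\rho\in\operatorname{Hom}(\pi_K,SU(2))
:
\operatorname{tr}\rho(\mathfrak{m})=0
\right\}.
\]
As a composition of two bijections it is itself a bijection, which is exactly the asserted $\Phi_{K,S^2}$. I expect no serious difficulty here: the entire content is the specialization $r=\pi/2$ and the transport of structure along $h$, both of which are already supplied by earlier results. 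If desired, one can further remark that the two properties recorded after Theorem \ref{main_theorem} (the correspondence of $SU(2)$-orbits with conjugacy classes, and of trivial colorings with abelian representations) transfer verbatim to $\Phi_{K,S^2}$, since $h$ intertwines the relevant $SU(2)$-actions and carries trivial subquandles to trivial subquandles.
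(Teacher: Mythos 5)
Your proposal is correct and follows exactly the route the paper intends: the paper gives no separate proof of the corollary but "restates the theorem in the case of $S^2_{\mathbb{R}}$," citing Theorem \ref{main_theorem} with $r=\pi/2$ (so that $2\cos r=0$) together with the isomorphism $h:S^2_{\mathbb{R}}\to S^2(\pi/2)$ of Theorem \ref{prop_augmented_qnd_presentation_spherical_qnd}, which is precisely your composition $\Phi_{K,\pi/2}\circ h_*$. Your explicit verification that $h_*$ is a bijection via $(h^{-1})_*$ is the routine step the paper leaves implicit, so there is nothing to add.
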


The class of $SU(2)$-representations in Colloray \ref{cor_1to1correspondence_spherical_quandles} corresponds to the class appeared in \cite{Lin1992}. It may be possible to interpret the Casson-Lin invariant by quandle theory.

\begin{table}[htb]
  \begin{tabular}{ccc}\hline
  \begin{tabular}{c}
   Trace-zero\\ $SU(2)$-rep. of $\pi_K$
  \end{tabular}
    & Interpretation by $\Phi_{K, S^2}$ & Reference \\\hline\hline
    trace-zero rep. & $S^2_{\mathbb{R}}$-colorings &
Prop. \ref{prop_augmented_qnd_presentation_spherical_qnd}
and Thm. \ref{main_theorem}    
    \\\hline
    abelian rep. & 
    \begin{tabular}{c}
trivial colorings in the \\sense
of Def. \ref{def_trivial-hom}
    \end{tabular}
    & Prop. \ref{prop_augmented_qnd_presentation_spherical_qnd}
and Thm. \ref{main_theorem}
    \\\hline
    conjugate & in the same $\operatorname{Inn}S^2_{\mathbb{R}}$-orbit & Prop. \ref{prop_augmented_qnd_presentation_spherical_qnd} and Rem. \ref{rem_SO(3)-orbit}\\\hline
  \end{tabular}
  \caption{Summary of the discussion in section \ref{section_1to1correspondence} in the case $S^2_{\mathbb{R}}$}
  \label{table_summary}
\end{table}
\section*{Acknowledgement}
The author is grateful to Professor Hiroyuki Ochiai, Kyushu University, for many valuable comments and discussions. He also thanks Professor Takefumi Nosaka, Tokyo Institute of Technology, for referring him to the paper \cite{ClarkSaito}.

\textit{Department of Mathematics, Kyushu University, 744 Motooka, Nishi-ku, Fukuoka 819–0395, Japan}
\end{document}